\newtheorem{thm}{Theorem}[section]
\newtheorem{lem}[thm]{Lemma}
\newtheorem{prb}[thm]{Problem}
\theoremstyle{definition}
\DeclareMathOperator{\Der}{Der}
\begin{document}

\title{The Problem of Differentiation\\
of Hyperelliptic Functions}
\author{Elena~Yu.~Bunkova}
\address{Steklov Mathematical Institute of Russian Academy of Sciences, 8 Gubkina St., Moscow, 119991, Russia.}
\email{bunkova@mi.ras.ru}
\keywords{Abelian functions, elliptic functions, Jacobians, hyperelliptic curves, hyperelliptic functions, Lie algebra of derivations, polynomial vector fields}
\thanks{Supported in part by Young Russian Mathematics award and the RFBR project 17-01-00366 A}

\begin{abstract}
In this work we describe a construction that leads to an explicit solution of the problem of differentiation of hyperelliptic functions.
A classical genus $g=1$ example of such a solution is the result of F.~G.~Frobenius and L.~Stickelberger \cite{FS}.

Our method follows the works \cite{B2} and \cite{B3} that led to constructions of explicit solutions of the problem for genus $g=2$ and $g=3$.
\end{abstract}

\maketitle

\section{Introduction} \label{s1}

We consider meromorphic functions $f$ in $\mathbb{C}^g$. A vector $\omega \in \mathbb{C}^g$ is called a period
for~$f$ if $f(z+\omega) = f(z)$ for any~$z \in \mathbb{C}^g$.
If the periods of $f$ form a lattice $\Gamma$ of rank~$2g$ in~$\mathbb{C}^g$, then $f$ is called an \emph{Abelian function}. We say that an Abelian function is a meromorphic function on the complex torus $T^g = \mathbb{C}^g/\Gamma$. We denote the coordinates in $\mathbb{C}^g$ by $(z_1, z_3, \ldots, z_{2g-1})$.

Let us consider hyperelliptic curves of genus $g$ in the model
\begin{equation} \label{1}
\mathcal{V}_\lambda = \{(X,Y)\in\mathbb{C}^2 \colon
Y^2 = X^{2g+1} + \lambda_4 X^{2 g - 1}  + \lambda_6 X^{2 g - 2} + \ldots + \lambda_{4 g} X + \lambda_{4 g + 2}\}. 
\end{equation}
Such a curve depends on the parameters $\lambda = (\lambda_4, \lambda_6, \ldots, \lambda_{4 g}, \lambda_{4 g + 2}) \in \mathbb{C}^{2 g}$.

Denote by $\mathcal{B} \subset \mathbb{C}^{2g}$ the subspace of parameters such that $\mathcal{V}_{\lambda}$ is non-singular for $\lambda \in \mathcal{B}$.
We have $\mathcal{B} = \mathbb{C}^{2g} \backslash \Sigma$ where $\Sigma$ is the discriminant curve.

A \emph{hyperelliptic function of genus} $g$ (see \cite{B2, BEL18}) is 
a meromorphic function in $\mathbb{C}^g \times \mathcal{B}$,
such that for each $\lambda \in \mathcal{B}$ it's restriction to $\mathbb{C}^g \times \lambda$
is Abelian with $T^g$ the Jacobian $\mathcal{J}_\lambda$ of~$\mathcal{V}_\lambda$.
We denote the field of hyperelliptic functions of genus $g$ by $\mathcal{F}$. See \cite{BEL18} for it's properties. 

Let $\mathcal{U}$ be the space of the fiber bundle $\pi: \mathcal{U} \to \mathcal{B}$ with fiber over $\lambda \in \mathcal{B}$ the Jacobian~$\mathcal{J}_\lambda$
of the curve $\mathcal{V}_\lambda$.
Thus, a hyperelliptic function is a meromorphic function in~$\mathcal{U}$.
According to Dubrovin--Novikov theorem \cite{DN}, there is a birational isomorphism between $\mathcal{U}$ and the complex linear space $\mathbb{C}^{3g}$.


\begin{prb}[\cite{BEL18}] \label{p1} 
For each~$g$ describe the Lie algebra $\Der \mathcal{F}$ of differentiations of $\mathcal{F}$,
that is find~$3 g$ independent differential operators $\mathscr{L}$ such that $\mathscr{L} \mathcal{F} \subset \mathcal{F}$.
\end{prb}

In case $g=1$ the solution of this problem is classical~\cite{FS}.
A method for solving this problem in a general case was presented in \cite{BL0, BL}. A good overview of this approach is given in \cite{BEL18}. It turned out that it is hard to follow this method to obtain explicit answers. 

Explicit solutions to this problem for $g=2$ and $g=3$ were first found in \cite{B2} and \cite{B3}. This works allow us to present a general method that is useful for any genus. Here we describe the general construction of this method.

We use the theory of hyperelliptic Kleinian functions (see \cite{Baker, BEL, BEL-97, BEL-12}, and~\cite{WW} for elliptic functions).
Take the coordinates
$(z, \lambda) = (z_1, z_3, \ldots, z_{2 g -1}, \lambda_4, \lambda_6, \ldots, \lambda_{4 g}, \lambda_{4 g + 2})$
in $\mathbb{C}^g \times \mathcal{B} \subset \mathbb{C}^{3g}$.
Let $\sigma(z, \lambda)$ be the hyperelliptic sigma function (or elliptic sigma function in genus $g=1$ case). We denote $\partial_k = {\partial \over \partial z_k}$.
Following \cite{B2, B3, BEL18}, we use the notation
\begin{equation} \label{n}
\zeta_{k} = \partial_k \ln \sigma(z, \lambda), \qquad
\wp_{i; k_1, \ldots, k_n} = - \partial_1^{i} \partial_{k_1} \cdots \partial_{k_n} \ln \sigma(z, \lambda),
\end{equation}
where $n \geqslant 0$, $i + n \geqslant 2$, $k_s \in \{ 1, 3, \ldots, 2 g - 1\}$. In the case $n = 0$ we will skip the semicolon.
Note that our notation for the variables $z_k$ differs from the one in \cite{BEL, BEL-97, BEL-12} as $u_{i} = z_{2g + 1 - 2 i}$.
The functions $\wp_{i; k_1, \ldots, k_n}$ provide us with examples of hyperelliptic functions. 

A key to our approach to the problem is the following theorem: 
\begin{thm}[\cite{BEL}] \label{TBEL} For $i, k \in \{1, 3, \ldots, 2 g - 1\}$ we have the relations
\begin{align}
\wp_{3; i} = & 6 \wp_2 \wp_{1; i} + 6 \wp_{1; i+2} - 2 \wp_{0;3, i} + 2 \lambda_{4} \delta_{i,1}, \label{2}\\
\wp_{2; i} \wp_{2; k} = & 4 \left(\wp_2 \wp_{1; i} \wp_{1; k} + \wp_{1; k} \wp_{1; i+2} + \wp_{1; i} \wp_{1; k+2} + \wp_{0; k+2, i+2}\right)  
   - \nonumber \\
   & - 2 (\wp_{1; i} \wp_{0;3,k} + \wp_{1; k} \wp_{0;3,i} + \wp_{0; k, i+4} + \wp_{0;i, k+4}) + \label{3} \\
   &+ 2 \lambda_4 (\delta_{i,1} \wp_{1; k} + \delta_{k,1} \wp_{1;i}) + 2 \lambda_{i+k+4} (2 \delta_{i,k} + \delta_{k, i-2} + \delta_{i, k-2}). \nonumber
\end{align}
\end{thm}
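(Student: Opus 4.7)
\emph{Proof proposal.} My plan is to verify both identities by pulling them back along the Abel map to the hyperelliptic curve, using the Klein--BEL bilinear formula as the key computational device, as in \cite{Baker, BEL, BEL-97}.

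By Jacobi inversion, a generic point $z \in \mathcal{J}_\lambda$ has the form $z = \sum_{k=1}^{g} A(P_k)$ for unordered points $P_k = (x_k, y_k) \in \mathcal{V}_\lambda$. On this open stratum, the Klein--BEL bilinear identity expresses a suitable generating function $\sum_{i,k=1}^{g}(-\partial_{2i-1}\partial_{2k-1}\ln\sigma)\, x_1^{g-i} x_2^{g-k}$ as an explicit rational function in $(x_1,y_1)$ and $(x_2,y_2)$, whose coefficients are polynomials in the $\lambda_{2m}$. Expanding in $x_1,x_2$ then yields explicit formulas for each individual $-\partial_{2i-1}\partial_{2k-1}\ln\sigma$ (which in this paper's notation is one of $\wp_{0;2i-1,2k-1}$, $\wp_{1;2k-1}$, or $\wp_{2}$) as a symmetric rational function of the $x_k, y_k$ modulo $y_k^2 - P(x_k)$, where $P(X)$ is the right-hand side of the curve equation \eqref{1}.

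To obtain the higher-order quantities $\wp_{2;i}$ and $\wp_{3;i}$, I would apply the derivations $\partial_{2k-1}$ to these formulas. Under Jacobi inversion, $\partial_{2k-1}$ transforms into $\sum_{j} M_{kj}(x,y)\,\partial/\partial x_j$ with an explicit matrix $M(x,y)$ inverse to the differentials-to-coordinates Jacobian. Differentiating then expresses every $\wp$-function occurring in \eqref{2}--\eqref{3} as a symmetric rational function of $(x_k, y_k)$ modulo $y_k^2 = P(x_k)$. Substituting into the proposed identities reduces each relation to a polynomial equality in the $x_k, y_k$, which follows by direct manipulation using the curve equation and the Vieta relations between the $x_k$ and the coefficients of $P(X)$.

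The main difficulty lies in the combinatorial bookkeeping. In particular, the Kronecker-delta corrections on the right-hand sides encode resonance terms that appear precisely when the weight $-(i+k+4)$ of a cross-term matches a coefficient $\lambda_{i+k+4}$ of $P(X)$, and these must be tracked with care. As an independent cross-check I would use the principal-part method: both sides of \eqref{2} and \eqref{3} are Abelian functions on $\mathcal{J}_\lambda$ with poles of order at most $4$ along the theta divisor, so their difference is a globally holomorphic Abelian function, hence a function of $\lambda$ alone. That remaining $\lambda$-dependent constant is then fixed by matching the lowest $z$-order coefficients of the Schur--Weierstrass expansion of $\sigma(z,\lambda)$ at the origin.
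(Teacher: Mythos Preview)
Your proposal is mathematically sound and outlines precisely the classical derivation of these identities via the Klein bilinear relation and Jacobi inversion; this is indeed how such relations are established in \cite{Baker, BEL, BEL-97}. However, the paper does not reprove the theorem at all: its entire proof consists of citing formulas~(4.1) and~(4.8) of \cite{BEL} and observing that, under the notational dictionary~\eqref{n} (in particular $u_i = z_{2g+1-2i}$), those formulas become \eqref{2} and \eqref{3}.

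So the difference is one of scope rather than method. The paper treats Theorem~\ref{TBEL} as a known input and only performs the index relabelling; you are sketching a self-contained proof from first principles, essentially reconstructing the argument of \cite{BEL}. Your approach buys independence from the reference and makes the origin of the $\lambda$-dependent Kronecker terms transparent, at the cost of substantial symmetric-function bookkeeping. If your goal is to match the paper, the correct ``proof'' is simply: locate (4.1) and (4.8) in \cite{BEL} and carry out the change of indices $u_i \leftrightarrow z_{2g+1-2i}$.
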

\begin{proof}
 In \cite{BEL} we have formulas (4.1) and (4.8). Using the notation \eqref{n} we get \eqref{2} from~(4.1) and \eqref{3} from (4.8).
\end{proof}

\section{The problem for polynomial vector fields}

The work \cite{BPol} constructs the theory of polynomial Lie algebras. Here we describe its connection with Problem \ref{p1}.

Consider the complex space $\mathbb{C}^{3g}$ with coordinates $x = (x_{i,j})$, 
where $i \in \{ 1,2,3 \}$, \mbox{$j \in \{1, 3, \ldots, 2 g -1\}$.}
We define the map $\varphi: \mathcal{U} \dashrightarrow \mathbb{C}^{3g}$
by
\[
\varphi: (z, \lambda) \mapsto (x_{i,j}) = (\wp_{i;j}(z, \lambda)).
\]
This map has the following property, proposed by V.~M.~Buchstaber (see \cite{B2}):

\begin{thm} \label{This}
The functions $\varphi^*(x_{i,j})$ give a set of generators of $\mathcal{F}$.
\end{thm}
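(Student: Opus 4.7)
The plan is to prove that $\varphi:\mathcal{U}\dashrightarrow\mathbb{C}^{3g}$ is a birational isomorphism; once this is established, $\varphi^{*}:\mathbb{C}(x_{i,j})\to\mathcal{F}$ is a field isomorphism, so the $3g$ functions $\varphi^{*}(x_{i,j})=\wp_{i;j}$ generate $\mathcal{F}$. By the Dubrovin--Novikov theorem cited above, $\mathcal{F}$ has transcendence degree $3g$ over~$\mathbb{C}$, so with exactly $3g$ candidate generators in hand the claim reduces to two checks: (a) $\varphi$ is dominant, i.e.\ the $\wp_{i;j}$ are algebraically independent; (b) $\varphi$ is generically one-to-one, i.e.\ $(z,\lambda)$ can be recovered rationally from the values $\wp_{i;j}(z,\lambda)$. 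For (a), I would compute the Jacobian of $\varphi$ with respect to $(z_{1},\ldots,z_{2g-1},\lambda_{4},\ldots,\lambda_{4g+2})$ at $z=0$ from the leading terms of the sigma-function expansion; at lowest order the matrix becomes block-triangular with invertible blocks.

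For (b), the key is Theorem~\ref{TBEL}. Solving relation~\eqref{2} for $\wp_{0;3,i}$ yields
\[
\wp_{0;3,i} \;=\; 3\wp_{2}\wp_{1;i} + 3\wp_{1;i+2} - \tfrac{1}{2}\wp_{3;i} + \lambda_{4}\delta_{i,1},
\]
so every $\wp_{0;3,i}$ is polynomial in the generators and in $\lambda_{4}$. Substituting these expressions into the quadratic relation~\eqref{3} and letting $(i,k)$ vary produces a system of $g+g(g+1)/2=g(g+3)/2$ polynomial equations in the same number of unknowns, namely the $2g$ curve parameters $\lambda_{2m}$ ($m=2,\ldots,2g+1$) together with the $g(g-1)/2$ two-index functions $\wp_{0;k_{1},k_{2}}$ having $k_{1},k_{2}\ge 3$. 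The Kronecker-delta combination $2\delta_{i,k}+\delta_{k,i-2}+\delta_{i,k-2}$ on the $\lambda_{i+k+4}$-term in~\eqref{3} makes the system triangular in the weight grading (with $\lambda_{2m}$ and $\wp_{0;k_1,k_2}$ assigned their natural weights), so one can invert it and express every $\lambda_{2m}$ and every such $\wp_{0;k_{1},k_{2}}$ as rational functions of the $\wp_{i;j}$. By the classical Kleinian theory (see \cite{Baker,BEL}), $\mathcal{F}$ is generated over $\mathbb{C}(\lambda)$ by the two-index functions $\wp_{0;k_{1},k_{2}}$ and their $\partial_{1}$-derivatives $\wp_{1;k_{1},k_{2}}$; applying $\partial_{1}$ to the rational expressions just obtained produces the latter as rational functions of the generators too, giving $\mathcal{F}\subseteq\mathbb{C}(\wp_{i;j})$.

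The main obstacle is verifying non-degeneracy of the square system described above, so that the rational inversion is well-defined. The weight-homogeneity of the identities in Theorem~\ref{TBEL} strongly suggests that the system is triangular enough to invert term-by-term, but this needs explicit verification. A secondary subtlety is the treatment of boundary terms $\wp_{1;i+2}$, $\wp_{0;k+2,i+2}$, $\wp_{0;k,i+4}$ whose indices leave the admissible range $\{1,3,\ldots,2g-1\}$; these must be reinterpreted via the Baker-type extension of the $\wp$-notation, which introduces additional polynomial corrections in~$\lambda$ that must be tracked through the inversion. Once this bookkeeping is carried out, birationality of $\varphi$ (and hence the theorem) follows.
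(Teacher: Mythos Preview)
Your part~(b) is essentially the paper's proof: cite the classical generation result for hyperelliptic Abelian functions, then use the relations of Theorem~\ref{TBEL} to show that the parameters $\lambda$ (and the residual two-index $\wp$'s) lie in the subfield generated by the $\wp_{i;j}$. The paper invokes a sharper form of the classical input --- any hyperelliptic function is already rational in $\wp_{1;k}$ and $\wp_{2;k}$ alone --- and then delegates the elimination of $\lambda$ and the auxiliary quantities to \cite[Corollary~5.2, Theorem~5.3]{B3} rather than carrying out the triangular inversion you sketch; your explicit inversion is exactly what those cited results contain. So the route is the same, with you doing by hand what the paper outsources.

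Your part~(a), however, is unnecessary and should be dropped. The theorem asserts only that the $\varphi^*(x_{i,j})$ \emph{generate} $\mathcal{F}$, not that they are algebraically independent or that $\varphi$ is birational. Since each $\wp_{i;j}$ is manifestly in $\mathcal{F}$, the only thing to prove is the reverse inclusion $\mathcal{F}\subseteq\mathbb{C}(\wp_{i;j})$, which is precisely your part~(b). Establishing dominance of $\varphi$ via a Jacobian computation adds nothing to the statement being proved. (Birationality of $\varphi$ is true and is discussed elsewhere in the paper via the diagram~\eqref{d}, but it is not the content of this theorem.) One small addendum to your part~(b): to justify that applying $\partial_1$ to the rational expressions stays inside $\mathbb{C}(\wp_{i;j})$, you should note that $\partial_1$ acts on the generators as the polynomial vector field $\mathcal{L}_1$ of~\eqref{L1}, so the field is $\partial_1$-closed.
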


\begin{proof}
We show that the functions $\wp_{i;j}(z, \lambda)$,
where $i \in \{ 1,2,3 \}$, \mbox{$j \in \{1, 3, \ldots, 2 g -1\}$,}
give a set of generators of $\mathcal{F}$.

We use a fundamental result from the theory of hyperelliptic Abelian functions (see~\cite[Chapter 5]{BEL-12}):
Any hyperelliptic function can be presented as a rational function
in $\wp_{1;k}$ and $\wp_{2;k}$, where $k \in \{1, 3, \ldots, 2 g - 1\}$.
Theorem \ref{TBEL} gives a set of relations between the derivatives of this functions.

Now by \cite[Corollary 5.2]{B3}, the functions
$(\varphi^*(x_{i,j}), \varphi^*(w_{k,l}), \varphi^*(\lambda_s))$ in the notation of this Corollary give a set of generators of $\mathcal{F}$. By \cite[Theorem 5.3]{B3} we obtain the claim of Theorem \ref{This}.
\end{proof}

Another property of $\varphi$ follows form \cite[Corollary 5.5]{B3}.
For each $g$ a there is polynomial map $p: \mathbb{C}^{3g} \to \mathbb{C}^{2g}$, such that we get the diagram
\begin{equation} 
 \xymatrix{
	\mathcal{U} \ar[d]^{\pi} \ar@{-->}[r]^{\varphi} & \mathbb{C}^{3 g} \ar[d]^{p}\\
	\mathcal{B} \ar@{^{(}->}[r] & \mathbb{C}^{2g}\\
	}   \label{d}
\end{equation}
Here $\mathcal{B} \subset \mathbb{C}^{2g}$ is the inclusion like in section \ref{s1}, with coordinates $\lambda$ in $\mathbb{C}^{2 g}$. 

We note that the proof of \cite[Theorem 5.3]{B3} gives a construction to obtain the polynomial maps $p$ explicitly. Examples of this maps for $g = 1,2,3$ are given in \cite{B3}. The work \cite[Theorem 3.2]{BEL18} claims that this polynomial maps are of degree at most $3$.

We refer the reader to \cite{BPol} for the theory of polynomial Lie algebras.
Denote the ring of polynomials in $\lambda \in \mathbb{C}^{2g}$ by $\mathcal{P}$.
Let us consider the polynomial map~$p\colon \mathbb{C}^{3g} \to \mathbb{C}^{2g}$.
A vector field $\mathcal{L}$ in $\mathbb{C}^{3g}$ will be called projectable for $p$ if there exists a vector field $L$ in $\mathbb{C}^{2g}$ such that
\[
 \mathcal{L}(p^* f) = p^* L(f) \quad \text{for any} \quad f \in \mathcal{P}.
\]
The vector field $L$ will be called the pushforward of $\mathcal{L}$.
A corollary of this definition is that for a projectable vector field $\mathcal{L}$ we have $\mathcal{L}(p^* \mathcal{P}) \subset p^* \mathcal{P}$.

\begin{prb}[\text{\cite[Problem 6.1]{B3}}] \label{p6}
Find $3 g$ polynomial vector fields in $\mathbb{C}^{3g}$ projectable for $p\colon \mathbb{C}^{3g} \to \mathbb{C}^{2g}$ and
independent at any point in $p^{-1}(\mathcal{B})$.
Construct their polynomial Lie algebra.
\end{prb}

The connection of this problem to Problem \ref{p1} is straightforward.
Given a solution to Problem \ref{p6} for each of the $3g$ vector fields~$\mathcal{L}_k$
with pushforwards $L_k$ we will restore the vector fields~$\mathscr{L}_k$ 
projectable for $\pi$ with pushforwards $L_k$ and such that $\mathscr{L}_k (\varphi^* x_{i,j}) = \varphi^* \mathcal{L}_k (x_{i,j})$
for the coordinate functions $x_{i,j}$ in $\mathbb{C}^{3g}$.
As $\varphi^* x_{i,j}$ are the generators of $\mathcal{F}$ and $\mathcal{L}_k (x_{i,j})$ is a polynomial in $x_{i,j}$,
this gives $\mathscr{L}_k (\varphi^* x_{i,j}) \in \mathcal{F}$ and $\mathscr{L}_k \in \Der \mathcal{F}$.

The plan to solve Problem \ref{p6} is the following. For each $g$:
\begin{itemize}
 \item Find the ``odd polynomial vector fields'', i.e. the $g$ independant polynomial vector fields $\mathcal{L}_{1}, \mathcal{L}_{3}, \ldots, \mathcal{L}_{2 g - 1}$ projectable for $p$ with zero pushforward.
 \item Define $2g$ independant polynomial vector fields $L_{0}, L_{2}, L_{4}, \ldots, L_{4 g - 2}$ in $\mathcal{B}$.
 \item Find the ``even polynomial vector fields'', i.e. the $2g$ polynomial vector fields $\mathcal{L}_{0}, \mathcal{L}_{2}, \mathcal{L}_{4}, \ldots, \mathcal{L}_{4 g - 2}$
 projectable for $p$ with pushforwards $L_{0}, L_{2}, L_{4}, \ldots, L_{4 g - 2}$ .
 \item Construct their polynomial Lie algebra.
\end{itemize}

We will do this steps in the following sections. In the last section we give the explicit solutions for problem \ref{p1} that can be constucted by this method (see \cite{B3}).

\section{Odd polynomial vector fields}

\begin{lem}[{\cite[Lemma 6.2 and Lemma 6.3]{B3}}] \label{Lem1}
 We have 
\begin{align}
\mathcal{L}_1 &= \sum_j x_{2,j} {\partial \over \partial x_{1,j}} + x_{3, j} {\partial \over \partial x_{2,j}} + 4 (2 x_{2} x_{2,j} + x_{3} x_{1, j} + x_{2, j+2}) {\partial \over \partial x_{3,j}}
\label{L1}
\end{align}
where $x_{2,2 g + 1} = 0$. 
 For $s = 3, 5, \ldots, 2g - 1$ we have 
\begin{multline}
\mathcal{L}_{s} = x_{2,s} {\partial \over \partial x_{2}} + x_{3,s} {\partial \over \partial x_{3}} \label{Ls}
+ \mathcal{L}_1(x_{3,s}) {\partial \over \partial x_{4}} + \\ +
\sum_{k=1}^{g-1} z_{1,s,2k+1} {\partial \over \partial x_{1,2k+1}} + \mathcal{L}_1(z_{1,s,2k+1}) {\partial \over \partial x_{2,2k+1}}
+ \mathcal{L}_1(\mathcal{L}_1(z_{1,s,2k+1})) {\partial \over \partial x_{3,2k+1}}.
\end{multline}
for some $y_{1,s,2k+1} = \mathcal{L}_s(x_{1,2k+1})$.
\end{lem}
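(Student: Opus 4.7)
The plan is to identify $\mathcal{L}_s$ as the polynomial vector field on $\mathbb{C}^{3g}$ realizing the coordinate derivation $\partial_s = \partial/\partial z_s$ through $\varphi$: concretely, we seek polynomials $\mathcal{L}_s(x_{i,j})$ in $(x,\lambda)$ such that $\varphi^*\mathcal{L}_s(x_{i,j}) = \partial_s \wp_{i;j}$ for every generator $x_{i,j}$. Once such $\mathcal{L}_s$ is constructed, projectability for $p$ with zero pushforward is automatic: since $\partial_s \lambda_m = 0$ for every odd $s$ and every $m$, we have $\mathcal{L}_s(p^*\lambda_m) = 0$, so the pushforward vanishes.

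For $s = 1$, the first two families of coefficients follow at once from $\partial_1 \wp_{1;j} = \wp_{2;j}$ and $\partial_1 \wp_{2;j} = \wp_{3;j}$, yielding $\mathcal{L}_1(x_{1,j}) = x_{2,j}$ and $\mathcal{L}_1(x_{2,j}) = x_{3,j}$. For the third family, differentiating \eqref{2} by $\partial_1$ produces
\[
\wp_{4;j} = 6\wp_3\wp_{1;j} + 6\wp_2\wp_{2;j} + 6\wp_{2;j+2} - 2\wp_{1;3,j}.
\]
The mixed partial $\wp_{1;3,j}$ is not itself a generator and must be eliminated. For $j = 1$ this is immediate by commutativity, $\wp_{1;3,1} = \wp_{2;3}$, and the right-hand side reduces to $4(2\wp_2\wp_{2;1} + \wp_3\wp_{1;1} + \wp_{2;3})$. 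For $j \geqslant 3$ the same outcome is obtained by combining \eqref{2} at $i = j$ with a suitable consequence of \eqref{3}, which together imply $\wp_{1;3,j} = \wp_3\wp_{1;j} - \wp_2\wp_{2;j} + \wp_{2;j+2}$. In either case one arrives at $\wp_{4;j} = 8\wp_2\wp_{2;j} + 4\wp_3\wp_{1;j} + 4\wp_{2;j+2}$, which is $4(2 x_2 x_{2,j} + x_3 x_{1,j} + x_{2,j+2})$ at $x = \wp$, giving the claimed $\mathcal{L}_1(x_{3,j})$.

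For $s \geqslant 3$, the three coefficients at $j = 1$ come from the symmetry $\partial_s \wp_{i;1} = \wp_{i+1;s}$, so $\mathcal{L}_s(x_{1,1}) = x_{2,s}$, $\mathcal{L}_s(x_{2,1}) = x_{3,s}$, and $\mathcal{L}_s(x_{3,1}) = \mathcal{L}_1(x_{3,s})$. For $j = 2k+1$ with $k \geqslant 1$, iterative use of \eqref{3} and \eqref{2} yields a polynomial $y_{1,s,2k+1}$ with $\varphi^* y_{1,s,2k+1} = \partial_s \wp_{1;2k+1}$; the remaining components are then forced by the commutativity $[\partial_s,\partial_1] = 0$, giving $\mathcal{L}_s(x_{2,2k+1}) = \mathcal{L}_1(y_{1,s,2k+1})$ and $\mathcal{L}_s(x_{3,2k+1}) = \mathcal{L}_1(\mathcal{L}_1(y_{1,s,2k+1}))$ from the identity $\partial_s \wp_{i;2k+1} = \partial_1^{i-1}\partial_s \wp_{1;2k+1}$.

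The main technical obstacle is the systematic elimination of non-generator mixed partials $\wp_{0;k,l}$ and $\wp_{1;k,l}$ (with $k,l$ both odd and $\geqslant 3$) that appear after differentiating \eqref{2}. This requires iterating \eqref{2} and rearranging \eqref{3} at suitable indices, and the bookkeeping grows intricate with $g$; the full verification is the content of \cite[Lemmas 6.2--6.3]{B3}.
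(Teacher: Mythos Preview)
The paper itself gives no proof of this lemma: it is stated with a bare citation to \cite[Lemmas~6.2 and~6.3]{B3}, and the surrounding text only explains how the undetermined quantities $y_{1,s,2k+1}$ are subsequently fixed via the KdV hierarchy. Your proposal therefore supplies strictly more than the paper does, namely a sketch of the argument behind the cited lemmas: interpret $\mathcal{L}_s$ as the polynomial realization of $\partial_s$ through $\varphi$, read off the $\mathcal{L}_1$-coefficients from $\partial_1\wp_{i;j}=\wp_{i+1;j}$ together with the differentiated relation~\eqref{2}, and for $s\geqslant 3$ use $\partial_s\wp_{i;1}=\wp_{i+1;s}$ and the commutativity $[\partial_1,\partial_s]=0$ to reduce everything to the single unknown $y_{1,s,2k+1}=\mathcal{L}_s(x_{1,2k+1})$. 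This is exactly the strategy of \cite{B3}, and since you too defer the detailed elimination of the mixed partials $\wp_{0;k,l}$, $\wp_{1;k,l}$ to that reference, your treatment is consistent with the source the paper relies on. One small caution: your intermediate identity $\wp_{1;3,j}=\wp_3\wp_{1;j}-\wp_2\wp_{2;j}+\wp_{2;j+2}$ for $j\geqslant 3$ is asserted as ``a suitable consequence of~\eqref{3}'' without indicating which instance; this step genuinely requires the product relation~\eqref{3} at $i=1$, $k=j$ combined with~\eqref{2}, and is not entirely trivial, so if you intend this as more than a pointer to \cite{B3} you should spell it out.
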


This lemma determines the odd polynomial vector fields given the value
$\mathcal{L}_s(x_{1,2k+1})$. For this value we use the construction of Korteweg--de Vries hierarchy \cite[Section~4.4]{BEL-97}.

\eject

The Korteweg--de Vries equation 
\[
 u_t = 6 u u_x - u_{xxx}
\]
for $x = z_1$, $-4 t = z_3$, $\Phi_2 = {1 \over 2} u$, $\Phi_4 = - {3 \over 2} \Phi_2^2 + {1 \over 4} \partial_1 \Phi_2$ takes the form
\[
 \partial_3 \Phi_2 = \partial_1 \Phi_4.
\]
It is the first equation of the Korteweg--de Vries hierarchy, which is an infinite system of differential equations 
\[
 \partial_{2k-1} \Phi_2 = \partial_1 \Phi_{2k}, \quad k = 2,3,4,\ldots
\]
where 
\[
 \partial_1 \Phi_{2k+2} = \mathcal{R} \partial_1 \Phi_{2k}
 \quad \text{ and } \quad
 \mathcal{R} = {1 \over 4} \partial_1^2 - 2 \Phi_2 - \Phi_2' \partial_1^{-1}
\]

\begin{thm}[{\cite[Theorem~4.12]{BEL-97}}]
 The function $u = 2 \wp_{2}(z)$ is a $g$-gap solution of the~Korteweg--de Vries system.
\end{thm}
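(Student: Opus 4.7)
The plan is to first verify the first equation of the KdV hierarchy for $u = 2\wp_2$ directly from Theorem \ref{TBEL}, then pass to the full hierarchy, and finally note the $g$-gap property. For the first equation, I take \eqref{2} with $i = 1$: since $\wp_{1;1} = \wp_2$, $\wp_{3;1} = \partial_1^2 \wp_2$, and $\wp_{0;3,1} = \wp_{1;3}$ (all obtained from the same partials of $-\ln \sigma$ up to reordering), the relation becomes $\partial_1^2 \wp_2 = 6 \wp_2^2 + 4 \wp_{1;3} + 2 \lambda_4$, so $\wp_{1;3} = \tfrac{1}{4}\partial_1^2 \wp_2 - \tfrac{3}{2}\wp_2^2 - \tfrac{1}{2}\lambda_4$. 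Applying $\partial_1$ and using $\partial_3 \wp_2 = \partial_1 \wp_{1;3}$ yields $\partial_3 \wp_2 = \tfrac{1}{4}\partial_1^3 \wp_2 - 3 \wp_2 \partial_1 \wp_2$. In the variables $x = z_1$, $-4t = z_3$, $u = 2\wp_2$ this is exactly $u_t = 6 u u_x - u_{xxx}$, equivalently $\partial_3 \Phi_2 = \partial_1 \Phi_4$ with $\Phi_4 = -\tfrac{3}{2}\Phi_2^2 + \tfrac{1}{4}\partial_1 \Phi_2$ (which differs from $\wp_{1;3}$ by an additive $\lambda$-constant killed by $\partial_1$).

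Next I would establish the higher equations $\partial_{2k-1}\Phi_2 = \partial_1 \Phi_{2k}$ for $k = 2, \ldots, g$ by induction on $k$, building $\Phi_{2k}$ via the Lenard recursion $\partial_1 \Phi_{2k+2} = \mathcal{R}\partial_1 \Phi_{2k}$. On the sigma side, relation \eqref{2} with $i = 2k-1$ relates $\partial_1^2 \wp_{1;2k-1} = \wp_{3;2k-1}$ to $\wp_{1;2k+1}$ modulo lower-order $\wp$-functions, and those lower-order functions can, by the generating property of $\wp_{1;j}, \wp_{2;j}$ used in the proof of Theorem \ref{This}, be expressed as differential polynomials in $\wp_2$. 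The main obstacle is matching the two resulting polynomials: here one invokes the uniqueness of the KdV hierarchy starting from $\Phi_2 = \wp_2$ together with the specific structure of the hyperelliptic sigma function, as carried out in \cite[Section~4.4]{BEL-97}.

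Finally, the $g$-gap property follows because $u = 2\wp_2$ depends (for fixed $\lambda \in \mathcal{B}$) only on the $g$ coordinates $z_1, z_3, \ldots, z_{2g-1}$ of the Jacobian $\mathcal{J}_\lambda$. Consequently, the higher KdV flows $\partial_{2k-1}$ for $k > g$ reduce to linear combinations of $\partial_1, \partial_3, \ldots, \partial_{2g-1}$ on $u$, so the solution has exactly $g$ independent time evolutions that linearize on the Jacobian of a hyperelliptic curve of genus $g$, which is the defining characterization of a $g$-gap solution.
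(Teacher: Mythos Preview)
The paper does not prove this theorem: it simply quotes it from \cite[Theorem~4.12]{BEL-97} and uses it as input. So there is nothing to compare against on the paper's side.

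As for your proposal itself, the first step is correct and cleanly done: taking \eqref{2} with $i=1$, identifying $\wp_{3;1}=\partial_1^2\wp_2$ and $\wp_{0;3,1}=\wp_{1;3}$, and using $\partial_3\wp_2=\partial_1\wp_{1;3}$ indeed yields the KdV equation for $u=2\wp_2$. Your $g$-gap argument is also the standard and correct one. The weak point is the inductive step for the higher flows: you write down the right ingredients (relation \eqref{2} for general $i$ plus the Lenard recursion), but at the moment where the two recursions have to be matched you explicitly invoke ``as carried out in \cite[Section~4.4]{BEL-97}'' --- which is the very source of the theorem you are proving. So your argument is not an independent proof; it is a verification of the base case together with a pointer to the original reference for the rest. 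That is fine as an outline, but you should be aware that the substantive content (matching the hyperelliptic recursion coming from \eqref{2} with the KdV Lenard recursion for all $k\le g$) is exactly what \cite{BEL-97} does, and you have not reproduced it.
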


This gives us a system of equations
\[
 \mathcal{L}_s(x_2) = \mathcal{L}_1 \Phi_s(x_2)
\]
with differential polynomials $\Phi_s$.
Thus in Lemma \ref{Lem1} we have
\[
 \mathcal{L}_s (x_{1,2k+1}) = \mathcal{L}_s ( \Phi_{2k}(x_2)).
\]
This determines $y_{1,s,2k+1}$.

\section{Even polynomial vector fields}

First we define the polynomial vector fields in $\mathcal{B}$. Recall
$\mathcal{B} = \mathbb{C}^{2g} \backslash \Sigma$ where $\Sigma$ is the discriminant curve.

For the vector fields $L_{0}, L_{2}, L_{4}, \ldots, L_{4 g - 2}$ in $\mathcal{B}$ we take the vector fields tangent to $\Sigma$, that are obtained from the convolution of invariants of the group $A_\mu$, see the construction by D.B.Fuchs in~\cite[Section 4]{A}. See also~\cite{BPol} 
and~\cite{BMinf}.

We consider $\mathbb{C}^{2g}$ with coordinates $(\lambda_4, \lambda_6, \ldots, \lambda_{4 g}, \lambda_{4 g + 2})$ and set $\lambda_s = 0$
for every $s \notin \{ 4,6, \ldots, 4 g, 4 g + 2\}$.
For $k, m \in \{ 1, 2, \ldots, 2 g\}$, $k\leqslant m$ set
\[
 T_{2k, 2m} = 2 (k+m) \lambda_{2k+2m} + \sum_{s=2}^{k-1} 2 (k + m - 2 s) \lambda_{2s} \lambda_{2k+2m-2s}
 - {2 k (2 g - m + 1) \over 2 g + 1} \lambda_{2k} \lambda_{2m},
\] 
and for $k > m $ set $T_{2k, 2m} = T_{2m, 2k}$.
For $k = 0, 1, 2, \ldots, {2 g - 1}$ we
have the vector fields 
\begin{equation} \label{Lk}
 L_{2k} = \sum_{s = 2}^{2 g + 1} T_{2k + 2, 2 s - 2} {\partial \over \partial \lambda_{2s}}.
\end{equation}

 The expressions \eqref{Lk} give polynomial vector fields tangent to the discriminant curve.

Now we need to find polynomial vector fields $\mathcal{L}_{2k}$ projectable for $p$ with pushforwards~$L_{2k}$.
The vector field $\mathcal{L}_0$ is the Euler vector field on $\mathbb{C}^{3g}$, we have
\begin{align}
\mathcal{L}_0 &= \sum_j (j+1) x_{1,j} {\partial \over \partial x_{1,j}} + (j+2) x_{2,j} {\partial \over \partial x_{2,j}} +
(j+3) x_{3, j} {\partial \over \partial x_{3,j}}. \label{L0}
\end{align}

All the other vector fields are determined using the condition on the polynomial Lie algebra
\begin{equation} \label{com1}
  \begin{pmatrix}
 [\mathcal{L}_1, \mathcal{L}_0] \\
 [\mathcal{L}_1, \mathcal{L}_2] \\
 [\mathcal{L}_1, \mathcal{L}_4] \\
 [\mathcal{L}_1, \mathcal{L}_6] \\
 \vdots \\
 [\mathcal{L}_1, \mathcal{L}_{4 g - 4}] \\
 [\mathcal{L}_1, \mathcal{L}_{4 g - 2}] 
 \end{pmatrix}
= 
 \begin{pmatrix}
 -1 & 0 & 0 & \ldots & 0\\
 x_{1,1} & -1 & 0 & \ldots & 0\\
 x_{1,3} & x_{1,1} & -1 & \ldots & 0 \\
 x_{1,5} & x_{1,3} & x_{1,1} & \ldots & 0 \\
 \ddots &  \ddots &  \ddots &  \ddots &  \ddots \\
 0 & 0 & \ldots & x_{1, 2g-1} & x_{1, 2g-3}\\
 0 & 0 & \ldots & 0 & x_{1, 2g-1}
 \end{pmatrix}
 \begin{pmatrix}
  \mathcal{L}_1 \\ \mathcal{L}_3\\ \vdots \\ \mathcal{L}_{2g-1}
 \end{pmatrix}.
\end{equation}

A demonstration of this method for genus $g=4$ will follow in our upcoming works.

\section{Explicit solutions of the Problem of Differentiation of~Hyperelliptic~Functions} \label{s3}

\subsection{Genus 1} See~\cite{FS}. The generators of the $\mathcal{F}$-module $\Der \mathcal{F}$ are
\begin{align*}
\mathscr{L}_0 &= L_0 - z_1 \partial_{1}, & \mathscr{L}_1 &= \partial_{1},
& \mathscr{L}_2 &= L_2 - \zeta_1 \partial_{1},
\end{align*}
Their Lie algebra is
$
[\mathscr{L}_0, \mathscr{L}_1] = \mathscr{L}_1, [\mathscr{L}_0, \mathscr{L}_2] = 2 \mathscr{L}_2,
[\mathscr{L}_1, \mathscr{L}_2] = \wp_2 \mathscr{L}_1.
$

\subsection{Genus 2} \label{ss2} The generators of the $\mathcal{F}$-module $\Der \mathcal{F}$ are (see~\cite[Theorem 29]{B2}):
\begin{align*}
\mathscr{L}_0 &= L_0 - z_1 \partial_{1} - 3 z_3 \partial_{3}, &
\mathscr{L}_2 &= L_2 + \left(- \zeta_1 + {4 \over 5} \lambda_4 z_3\right) \partial_{1} - z_1 \partial_{3}, \\
\mathscr{L}_1 &= \partial_{1}, &
\mathscr{L}_4 &= L_4 + \left(- \zeta_3 + {6 \over 5} \lambda_6 z_3\right) \partial_{1} - \left(\zeta_1 + \lambda_4 z_3\right) \partial_{3}, \\
\mathscr{L}_3 &= \partial_{3}, &
\mathscr{L}_6 &= L_6 + {3 \over 5} \lambda_8 z_3 \partial_{1} - \zeta_3 \partial_{3}.
\end{align*} 
Their Lie algebra can be found in \cite[Theorem 32]{B2}.

\subsection{Genus 3} \label{ss3} The generators of the $\mathcal{F}$-module $\Der \mathcal{F}$ are (see~\cite[Theorem 10.1]{B3}):
\begin{align*}
\mathscr{L}_1 &= \partial_{1}, \qquad \mathscr{L}_3 = \partial_{3}, \qquad \mathscr{L}_5 = \partial_{5}, \\
\mathscr{L}_0 &= L_0 - z_1 \partial_{1} - 3 z_3 \partial_{3} - 5 z_5 \partial_{5}, \\
\mathscr{L}_2 &= L_2 - \left(\zeta_1 - {8 \over 7} \lambda_4 z_3 \right) \partial_{1}
- \left( z_1 - {4 \over 7} \lambda_4 z_5 \right) \partial_{3} - 3 z_3 \partial_{5}, \\
\mathscr{L}_4 &= L_4 - \left(\zeta_3 - {12 \over 7} \lambda_6 z_3 \right) \partial_{1}
- \left( \zeta_1 + \lambda_4 z_3 - {6 \over 7} \lambda_6 z_5 \right) \partial_{3} - (z_1 + 3 \lambda_4 z_5) \partial_{5}, \\
\mathscr{L}_6 &= L_6 - \left(\zeta_5 - {9 \over 7} \lambda_8 z_3 \right) \partial_{1}
- \left(\zeta_3 - {8 \over 7} \lambda_8 z_5 \right) \partial_{3}
- \left(\zeta_1 + \lambda_4 z_3 + 2 \lambda_6 z_5 \right) \partial_{5}, \\
\mathscr{L}_8 &= L_8 + \left({6 \over 7} \lambda_{10} z_3 - \lambda_{12} z_5\right) \partial_{1}
- \left(\zeta_5 - {10 \over 7} \lambda_{10} z_5 \right) \partial_{3}
- \left(\zeta_3 + \lambda_8 z_5 \right) \partial_{5}, \\
\mathscr{L}_{10} &= L_{10} + \left( {3 \over 7} \lambda_{12} z_3 - 2 \lambda_{14} z_5 \right) \partial_{1}
+ {5 \over 7} \lambda_{12} z_5 \partial_{3} - \zeta_5 \partial_{5}. 
\end{align*}
Their Lie algebra can be found in \cite[Corollary 10.2]{B3}.

\end{document}